\newtheorem{theorem}{Theorem}[section]
\newtheorem{lemma}[theorem]{Lemma}
\newtheorem{cor}[theorem]{Corollary}
\numberwithin{equation}{section}
\numberwithin{figure}{section}
\newcommand{\Z}{\mathbb{Z}}
\newcommand{\arxiv}[1]{\href{http://arxiv.org/abs/#1}{\texttt{arXiv:#1}}}
\title{\bf Enumeration of lozenge tilings of a hexagon with a maximal staircase and a unit triangle removed}
\author{Ranjan Rohatgi\\
\small Department of Mathematics\\[-0.8ex]
\small Indiana University\\[-0.8ex] 
\small Bloomington, IN, 47405\\
\small\tt rrohatgi@indiana.edu\\}
\date{}
\begin{document}

\maketitle

 \begin{abstract}
Proctor proved a formula for the number of lozenge tilings of a hexagon with side-lengths $a,b,c,a,b,c$ after removing a ``maximal staircase." Ciucu then presented a weighted version of Proctor's result.  Here we present weighted and unweighted formulas for a similar region which has an additional unit triangle removed.  We use Kuo's graphical condensation method to prove the results.  By applying the factorization theorem of Ciucu, we obtain a formula for the number of lozenge tilings of a hexagon with three holes on consecutive edges.
 \end{abstract}
 
  \section{Introduction}
 
  The triangular lattice is the tiling of the plane by unit equilateral triangles.  Without loss of generality, we assume that the lattice comprises horizontal lines, as well as lines whose angles of incidence to the horizontal lines is either 60 or 120 degrees.  A \textit{region} in the triangular lattice is any finite union of these unit triangles and a \textit{lozenge} is any union of two unit triangles which share an edge.  A \textit{lozenge tiling} of a region $R$ is any covering of all unit triangles in $R$ by non-overlapping lozenges.  It is clear that a region must be have the same number of upward-pointing unit triangles as downward-pointing ones to have any tilings at all, since a lozenge contains one unit triangle of each type.  We say that such a region is \textit{balanced}.  We can assign to any lozenge that could be used in a tiling a  weight, $w$, which is a positive real number.  An \textit{unweighted} region has all weights equal to 1.    
 
The weight of a lozenge tiling of $R$ is the product of all the weights of the lozenges used in the tiling.  We denote by $M(R)$ the \textit{matching generating function} of the region $R$, which is the sum of the weights of all tilings of $R$.  For an unweighted region, the matching generating function simply gives the number of tilings of the region.
 
 MacMahon's work in \cite{macmahon} proved that for a hexagonal region with side-lengths $a,b,c,a,b,c$, the number of lozenge tilings is given by the formula 
 \begin{equation}\label{macmahonthm}
\frac{H(a)H(b)H(c)H(a+b+c)}{H(a+b)H(a+c)H(b+c)},
 \end{equation}
where we define the hyper factorials $H(n)$ for positive integers $n$ by
\begin{equation*}
H(n):=0!1!\ldots (n-1)!
\end{equation*}

The simplicity of (\ref{macmahonthm}) has inspired many to look for generalizations or similar results.  Here is a result of Proctor which enumerates the lozenge tilings of a hexagon with a ``maximal staircase" removed, denoted $P_{a,b,c}$ (see Figure~\ref{proctor}).

\begin{theorem}[Proctor \cite{proctor}]\label{proctorthm}
For any non-negative integers $a,b,$ and $c$ with $a\leq b$, we have $$M(P_{a,b,c})=\prod_{i=1}^a \left[\prod_{j=1}^{b-a+1}\frac{c+i+j-1}{i+j-1}\prod_{j=b-a+2}^{b-a+i}\frac{2c+i+j-1}{i+j-1}\right],$$ where empty products are taken to be $1$.  Further, $M(P_{b+1,b,c})=M(P_{b,b,c}).$
\end{theorem}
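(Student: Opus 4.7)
The plan is to prove the formula by induction on $a+b+c$ using Kuo's graphical condensation, which is the method the paper applies to its main theorems and which adapts well to families of regions bounded by a staircase. I would first fix coordinates for $P_{a,b,c}$ on the triangular lattice, check that it is balanced by counting up- and down-pointing unit triangles, and dispose of the easy degenerate cases: $a=0$ reduces to MacMahon's formula \eqref{macmahonthm}, and the product collapses correctly when $c=0$ or when the relevant inner product is empty.

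The main step is to choose four unit triangles on the boundary of $P_{a,b,c}$ --- two up-pointing and two down-pointing --- so that Kuo condensation yields a recurrence
\[
M(P_{a,b,c})\,M(R_0) \;=\; M(R_1)\,M(R_2) + M(R_3)\,M(R_4),
\]
in which each of the five auxiliary regions is either of the form $P_{a',b',c'}$ with smaller indices, or reduces to such a region after the lozenges forced by its boundary are removed. A natural first attempt is to place one pair of chosen triangles near the tip of the removed staircase and the other pair at the opposite corner of the hexagon; experimentation with the figure will be needed to confirm that this keeps all resulting regions inside the family.

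Once a closed recurrence within the family is in hand, the remaining work is algebraic: verify that the right-hand side of Proctor's formula satisfies the same recurrence, factor by factor. The split of the inner product at $j=b-a+1$ contributes most of the bookkeeping; I would handle it by writing the formula as a single ratio of hyperfactorial-like products and checking that the recurrence reduces to an identity of rational functions in $a,b,c$. The boundary identity $M(P_{b+1,b,c})=M(P_{b,b,c})$ I expect to follow directly by observing that in $P_{b+1,b,c}$ the configuration at the tip of the enlarged staircase forces a strip of lozenges whose removal leaves exactly $P_{b,b,c}$.

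The hardest part will be finding a condensation pattern that keeps all four auxiliary regions inside the family $\{P_{a,b,c}\}$. A naive choice of the four boundary triangles typically drifts outside the family and would force one to enlarge it, introduce extra parameters, and verify a more complicated multi-variate recurrence; avoiding this requires a careful inspection of the staircase boundary and may ultimately suggest a slightly modified parametrization in which the induction closes cleanly. If this obstruction proves too stubborn, an alternative plan of attack is to translate tilings into non-intersecting lattice paths between prescribed sources and sinks, apply the Lindstr\"om--Gessel--Viennot lemma, and evaluate the resulting determinant directly; the piecewise shape of the formula suggests this determinant factors in a way compatible with a Weyl-type denominator identity.
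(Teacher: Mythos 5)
First, a point of orientation: the paper does not prove this statement at all. It is quoted as Theorem~\ref{proctorthm} from Proctor's paper on odd symplectic groups, where it arises from a representation-theoretic dimension formula rather than from any tiling argument, and it is then used as a known input to the recurrence \eqref{rec}. So there is no proof in the paper to compare yours against; what you are proposing is a new combinatorial proof of a cited result.

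As a proof, your proposal has a genuine gap at its central step. Everything hinges on exhibiting four boundary triangles for which Kuo condensation yields a recurrence whose auxiliary regions all lie back in the family $\{P_{a',b',c'}\}$ after removing forced lozenges, and you never produce such a choice --- you defer it to ``experimentation with the figure'' and concede that naive choices drift outside the family. That concern is not hypothetical: the paper's own computation shows exactly what happens when one condenses a staircase region with vertices placed near its corners --- among the six regions appearing in \eqref{rec} one finds not only $P$-type regions but also the dented regions $S_{a,b,c}$ and $S_{a-1,b-2,c+1}$. In other words, the natural condensation on these regions does \emph{not} close within the $P$-family; that failure is precisely what motivates the paper to introduce and enumerate $S_{a,b,c}$, taking Proctor's formula as given. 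A Kuo-condensation proof of Proctor's theorem would therefore almost certainly require enlarging the family and running a multi-parameter induction, which you acknowledge as a risk but do not carry out; until the four vertices, the resulting regions, the forced lozenges, and the algebraic verification of the recurrence are written down explicitly, there is no proof. Your fallback via nonintersecting lattice paths and the Lindstr\"om--Gessel--Viennot lemma is the more promising route --- determinant evaluations of this kind are how combinatorial proofs of Proctor-type product formulas are typically obtained --- but as stated it too is only a plan: the determinant is neither set up nor evaluated. The one piece of your outline that does work as described is the boundary identity $M(P_{b+1,b,c})=M(P_{b,b,c})$, which indeed follows from forced lozenges at the tip of the enlarged staircase.
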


\begin{figure}
\centering
\includegraphics[height=2.5in]{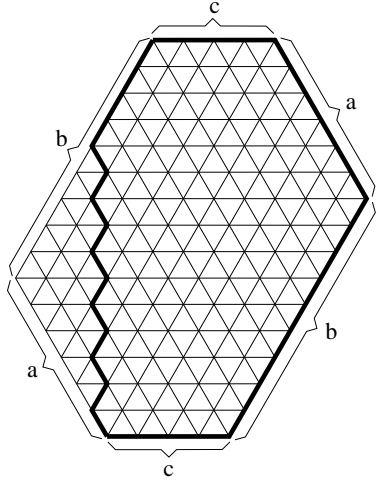}
\caption{The region $P_{a,b,c}$ for $a=6,b=9,\textrm{and } c=4.$}
\label{proctor}
\end{figure}

The following result of Ciucu provides a formula for the matching generating function of the same region, but with each of the vertical lozenges on the west side given weight $\frac{1}{2}$.  We denote this region by $P'_{a,b,c}$.  In Figure~\ref{proctorweight} (and throughout this paper), lozenges with ovals have weight $\frac{1}{2}$ while those without are unweighted.

 \begin{figure}[ht]
 \centering
 \includegraphics[height=2.5in]{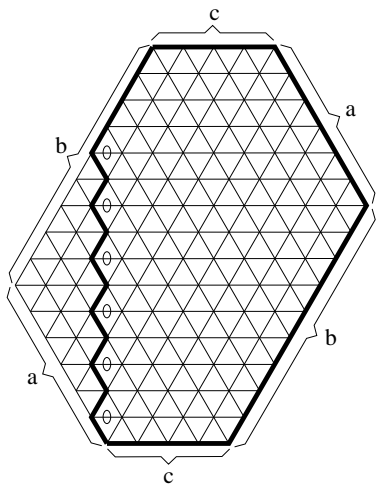}
 \caption{The region $P'_{a,b,c}$ has weighted lozenges along its west side.}
 \label{proctorweight}
 \end{figure}

\begin{theorem}[Ciucu \cite{ciucu05}]\label{ciucuthm}
For any non-negative integers $a,b,$ and $c$ with $a\leq b$ we have \[M(P'_{a,b,c})=\frac{M(P_{a,b,c})}{2^a}\cdot \prod_{i=1}^a \frac{2c+b-a+i}{c+b-a+i}.\]  As in \Cref{proctorthm}, $M(P'_{b+1,b,c})=M(P'_{b,b,c})$.
\end{theorem}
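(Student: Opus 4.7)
The plan is to derive the weighted formula from Proctor's unweighted formula (\Cref{proctorthm}) by invoking Ciucu's factorization theorem for tilings of centrally symmetric lattice regions. This is a natural tool because the ratio
\[
M(P'_{a,b,c})/M(P_{a,b,c}) \;=\; 2^{-a}\prod_{i=1}^a \frac{2c+b-a+i}{c+b-a+i}
\]
has precisely the shape produced by that theorem when applied to a region whose axis of symmetry cuts $a$ unit edges. The essential idea is to build a centrally symmetric region $Q_{a,b,c}$ whose two halves, after the theorem is applied, are $P'_{a,b,c}$ (with $\tfrac12$-weights on the axial lozenges) and a mirror copy of $P_{a,b,c}$.

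The first step I would take is to construct $Q_{a,b,c}$ and verify the technical hypotheses of the factorization theorem. A natural candidate is the symmetrization of $P_{a,b,c}$ obtained by reflecting across a suitable vertical line running along (or just west of) the staircase and gluing the reflected copy to $P_{a,b,c}$. Once $Q_{a,b,c}$ is arranged so that its right half matches $P'_{a,b,c}$ and the factorization theorem applies, one should obtain an identity of the form
\[
M(Q_{a,b,c}) \;=\; 2^{a}\, M(P'_{a,b,c})\, M(P_{a,b,c}).
\]

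The second step is to evaluate $M(Q_{a,b,c})$ independently. Identifying the lozenges that are forced by the outer boundary of $Q_{a,b,c}$ should collapse it to a region enumerable either by MacMahon's formula (\ref{macmahonthm}) directly or by a second application of Proctor's formula. Substituting that closed form into the factorization identity and solving for $M(P'_{a,b,c})$, then simplifying against Proctor's product for $M(P_{a,b,c})$, should yield the claimed ratio $\prod_{i=1}^{a}(2c+b-a+i)/(c+b-a+i)$ after cancellation of hyperfactorials. The supplementary identity $M(P'_{b+1,b,c})=M(P'_{b,b,c})$ then reduces to the corresponding identity in \Cref{proctorthm}, since in the $a=b+1$ case the extra forced row of lozenges lies away from the half-weighted lozenges along the west.

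The principal obstacle I expect is the geometric step of pinning down $Q_{a,b,c}$ correctly. Because the western boundary of $P_{a,b,c}$ is a zig-zag rather than a straight edge, the axis of symmetry and the pattern of axial lozenges must be chosen with care so that (i) the hypotheses of the factorization theorem hold, (ii) the exponent in the factorization is exactly $a$ rather than some shifted cousin, and (iii) the right half of $Q_{a,b,c}$ really coincides with $P'_{a,b,c}$ and not with a translate or dilate. Once this setup is correct, the remaining steps are largely routine manipulations of products of hyperfactorials.
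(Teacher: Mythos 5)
This statement is quoted from Ciucu \cite{ciucu05} as a known result; the paper contains no proof of it, so there is nothing internal to compare your argument against. Judged on its own terms, your route --- symmetrize $P_{a,b,c}$ across the staircase boundary and apply the factorization theorem of \cite{ciucu97} --- is the natural one, and it is exactly the mechanism by which weight-$\tfrac12$ boundary lozenges arise in this subject (the paper's own Corollary in Section 5 is an instance of the same move).

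There is, however, a concrete gap in the key identity you write down. The factorization theorem does not split a symmetric region into two congruent halves: the zigzag cut along the axis assigns the axial cells to one piece, so $R^+$ carries the weight-$\tfrac12$ lozenges while $R^-$ is a strictly \emph{smaller} region. Compare the paper's Corollary, whose factors are $S'_{a,b,c}$ and $S_{a,b-1,c}$ --- note the $b-1$. So your identity $M(Q_{a,b,c})=2^{a}M(P'_{a,b,c})M(P_{a,b,c})$ cannot be right as stated; the second factor must be a $P$-region with shifted parameters (after clearing forced lozenges, something like $P_{a,b-1,c}$), and the exponent of $2$ must be re-derived from the actual count of axial lozenge positions. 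This changes the endgame: after evaluating $M(Q_{a,b,c})$ by MacMahon's formula (\ref{macmahonthm}) (which requires first checking that the symmetrization really is an undented, vertically symmetric hexagon), you solve for $M(P'_{a,b,c})$ in terms of $M(P_{a,b-1,c})$, and you then need a second application of \Cref{proctorthm} to convert $M(P_{a,b-1,c})$ into $M(P_{a,b,c})$ times an explicit ratio before the claimed factor $2^{-a}\prod_{i=1}^{a}(2c+b-a+i)/(c+b-a+i)$ can emerge. You correctly flagged the geometric setup as the delicate point; the congruent-halves assumption is precisely where it fails, and with that repaired the remaining steps are indeed routine manipulation of the product formulas.
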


 \section{Main Result}

  Consider a hexagon with side-lengths $a+2,b,c+,a+1,b+1,c$, with both a maximal staircase and a single upward-pointing unit triangle removed, as in Figure~\ref{sandsprime}.  The removed unit triangle is the second from the bottom on the northeast side of the original hexagon.  We call such a region $S_{a,b,c}$.  We denote the corresponding weighted version, with all vertical lozenges on the west side with weight $\frac{1}{2}$, by $S'_{a,b,c}$.  We give the formulas for their matching generating functions below.  For ease of notation, define the Pochhammer symbol $(\alpha)_k$ for $k\in\Z$:
  
  \begin{displaymath}
(\alpha)_k:= \left\{
	\begin{array}{lr}
		\alpha(\alpha+1)\ldots (\alpha+k-1) & \textrm{ if } k>0,\\
		1 & \textrm{ if } k=0,\\ 
		1/(\alpha-1)(\alpha-2)\ldots (\alpha+k) & \textrm{ if } k<0.
	\end{array}
	\right.
\end{displaymath}

  \begin{figure}[ht]
 \centering
 \includegraphics[height=2.3in]{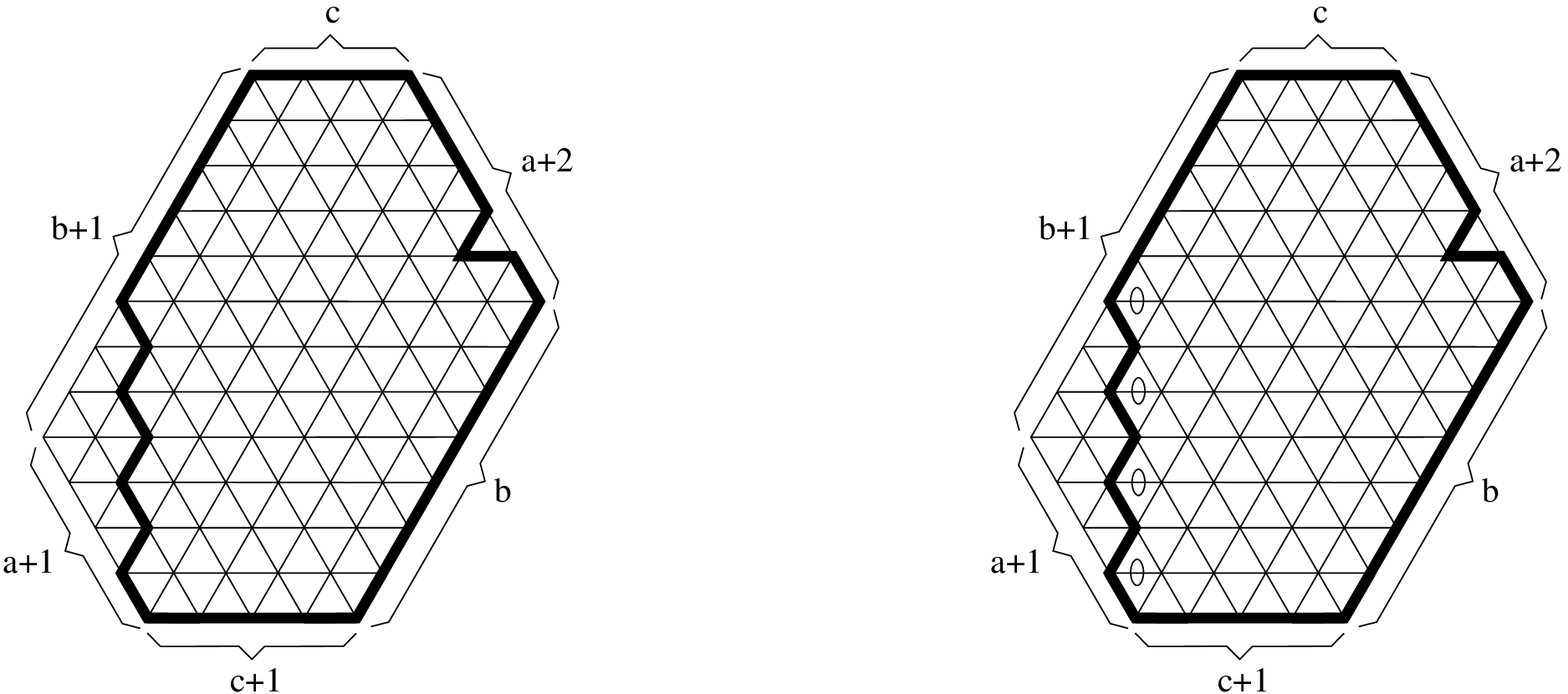}
 \caption{The regions $S_{a,b,c}$ and $S'_{a,b,c}$ with $a=3, b=7,$ and $c=3$.}
 \label{sandsprime}
 \end{figure}

  \begin{theorem} \label{mainthm}
  For any non-negative integers $a,b,$ and $c$ with $a\leq b$, we have 
  \begin{multline*}
  M(S_{a,b,c})=\dfrac{(c+a+2)_{b-a}(2c+b+3)_{a-1}[(2b-a+2)c+(b+1)(b+2)]}{(a+2)_{b-a+1}(b+3)_{a-1}}\\
  \times\prod_{i=1}^a\frac{(c+i)_{b-a+1}(2c+b-a+1+i)_{i-1}}{(i)_{b-a+1}(b-a+1+i)_{i-1}}. 
  \end{multline*}
  If $a=b+1$, $M(S_{b+1,b,c})=M(S_{b,b,c}).$
  \end{theorem}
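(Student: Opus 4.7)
The plan is to prove \Cref{mainthm} by induction on the parameters, using Kuo's graphical condensation as the engine. Passing to the honeycomb dual, each lozenge tiling of $S_{a,b,c}$ becomes a perfect matching of a finite planar bipartite graph $G$. For any four vertices $u,v,w,x$ on the outer face appearing in this cyclic order, with $u,w$ in one part of the bipartition and $v,x$ in the other, Kuo's theorem states
\[M(G)\,M(G-\{u,v,w,x\}) = M(G-\{u,v\})\,M(G-\{w,x\}) + M(G-\{u,x\})\,M(G-\{v,w\}).\]
Translated back to regions, deletion of a pair of oppositely-coloured vertices removes a pair of unit triangles from the boundary, and the goal is to choose $u,v,w,x$ so that each of the five resulting regions is either another $S$-region with smaller parameters or a Proctor region $P_{a',b',c'}$ whose enumeration is supplied by \Cref{proctorthm}.

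My first attempt would place the four vertices near the top of the staircase and near the removed unit triangle on the northeast side, hoping to produce a recurrence of the schematic shape
\[M(S_{a,b,c})\,M(P_0) = M(S_{a,b-1,c})\,M(P_1) + M(S_{a-1,b,c})\,M(P_2),\]
for Proctor-type regions $P_0,P_1,P_2$ whose parameters depend on $a,b,c$. Given such a recurrence, the induction hypothesis together with \Cref{proctorthm} supplies closed forms for every factor except $M(S_{a,b,c})$, and checking that the candidate formula satisfies the recurrence reduces to a polynomial identity in $a,b,c$ after common Pochhammer factors cancel. The unusual polynomial $(2b-a+2)c+(b+1)(b+2)$ in the statement is precisely the kind of inhomogeneous factor one expects to emerge as a sum of two contributions from the two summands on the right of Kuo's identity, which is a good sign.

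For the base, $a=0$ is most natural: $S_{0,b,c}$ is a hexagon with a single unit triangle removed, whose tilings can be enumerated directly using \eqref{macmahonthm} together with a forced-lozenge argument or by applying Ciucu's factorization theorem. The boundary identity $M(S_{b+1,b,c})=M(S_{b,b,c})$ is most cleanly established bijectively, since for $a=b+1$ the extra step of the staircase abuts the boundary and forces a strip of lozenges whose removal identifies tilings of $S_{b+1,b,c}$ with tilings of $S_{b,b,c}$. The principal obstacle, in my view, is the choice of condensation vertices: the asymmetry introduced by the single removed unit triangle means that a generic placement produces reduced regions outside the family $\{S_{a,b,c}\}\cup\{P_{a,b,c}\}$, derailing the induction. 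One may well need to invoke Kuo's theorem twice, with two different configurations of vertices, to obtain a closed system of recurrences, and the bulk of the combinatorial work is in locating these configurations.
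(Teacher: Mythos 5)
Your overall strategy (Kuo condensation plus induction, with MacMahon's formula and forcing for the base cases) is the right one, but the proposal stops short of the one idea that makes it work, and the recurrence you guess is not the one that closes. You propose applying the balanced four-vertex form of Kuo's theorem to the dual graph of $S_{a,b,c}$ itself, and you correctly identify the resulting obstacle: removing pairs of boundary triangles from a region that already carries the extra unit dent generically produces regions outside the family $\{S_{a',b',c'}\}\cup\{P_{a',b',c'}\}$. The paper's resolution is to change which graph plays the role of $G$: it applies the \emph{unbalanced} form of condensation (\Cref{kuograph}, with $|V_1|=|V_2|+1$, three removed vertices in $V_1$ and one in $V_2$) to the hexagon with only the maximal staircase removed, i.e.\ to the region obtained from $S_{a,b,c}$ by \emph{not} deleting the unit triangle. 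With $t,u,v,w$ placed as in Figure~\ref{kuoapplied}, the single deletion $G-u$ recreates exactly $S_{a,b,c}$, while every other term becomes a Proctor region or a smaller $S$-region after forced lozenges are stripped, yielding
\[
M(S_{a,b,c})M(P_{a,b-1,c+1})=M(P_{a+1,b,c+1})M(P_{a,b,c})+M(P_{a+1,b+1,c})M(S_{a-1,b-2,c+1}).
\]
Note that the right-hand side contains only \emph{one} unknown $S$-term, with $a$ decreased by one, so a clean induction on $a$ suffices; your schematic recurrence with both $M(S_{a,b-1,c})$ and $M(S_{a-1,b,c})$ on the right does not arise from any single placement here and would in any case force a more delicate double induction.

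A few smaller points. The base case $a=0$ works as you say: the top row is forced and \eqref{macmahonthm} gives $\binom{b+c+1}{b}$. But the recurrence above requires $b\geq 2$ as well as $a\geq 1$, so you must separately handle $a=b=1$; the paper does this with the Graph Splitting Lemma (\Cref{graphsplit}), factoring $S_{1,1,c}$ into two small hexagons with counts $c+1$ and $c+2$. Your forcing argument for $M(S_{b+1,b,c})=M(S_{b,b,c})$ is fine. But without the explicit vertex configuration and the verification (via forced lozenges) that all six condensation terms land in the known families, the proposal as written does not yet constitute a proof.
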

  
  \begin{theorem} \label{weighted}
  For any non-negative integers $a,b,$ and $c$ with $a\leq b$, we have 
  \[ M(S'_{a,b,c})=\dfrac{M(S_{a,b,c})}{2^a}\cdot\dfrac{2c+b+2}{c+b+1}\cdot\prod_{i=1}^a\frac{2c+b+1-i}{c+b+1-i} .\]
  If $a=b+1$, $M(S'_{b+1,b,c})=M(S'_{b,b,c})$.
  \end{theorem}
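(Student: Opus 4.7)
My plan is to mirror the Kuo-condensation proof of \Cref{mainthm} in the weighted setting. Kuo's graphical condensation identity holds for arbitrary edge weightings of the bipartite dual graph, so the same choice of four boundary vertices used to obtain the recurrence for $M(S_{a,b,c})$ produces a structurally identical recurrence for $M(S'_{a,b,c})$, with each unweighted matching generating function of a subregion replaced by its weighted analogue.  These subregions should be smaller copies of $S'$ together with weighted Proctor regions $P'$, whose matching generating functions are supplied by \Cref{ciucuthm}.

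I then induct on $a+b+c$.  The base cases are handled at small $a$ (say $a=0$ or $a=1$), where the claim reduces to direct comparisons using \Cref{mainthm} and \Cref{ciucuthm}.  The degenerate identity $M(S'_{b+1,b,c})=M(S'_{b,b,c})$ is inherited from the corresponding identity in \Cref{mainthm}, after checking that the weight-correction factors on the two sides agree as well.  For the inductive step, I substitute the closed form of \Cref{weighted} into both sides of the recurrence and verify the resulting identity of rational functions in $a,b,c$.

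The main obstacle will be this algebraic verification.  The formula couples all the Pochhammer factors of $M(S_{a,b,c})$---including the aberrant linear factor $(2b-a+2)c+(b+1)(b+2)$---to the weight correction $\frac{2c+b+2}{c+b+1}\cdot\prod_{i=1}^{a}\frac{2c+b+1-i}{c+b+1-i}$.  Tracking how the ratio $M(S'_{a',b',c'})/M(S_{a',b',c'})$ shifts under the parameter changes dictated by the recurrence, and matching this against the analogous shifts of the Ciucu weight-correction in \Cref{ciucuthm}, should reveal the required cancellations.  The structural parallel with Ciucu's proof of \Cref{ciucuthm} strongly suggests that the same pattern of Pochhammer identities will close the argument, shifted here to reflect the additional unit triangle that has been removed.
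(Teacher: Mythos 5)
Your proposal follows essentially the same route as the paper: the author likewise applies Kuo condensation to the weighted dual graph to obtain the recurrence of \Cref{rec} with $S$ and $P$ replaced by $S'$ and $P'$, substitutes the closed forms (using \Cref{ciucuthm} for the $P'$-regions), and verifies the resulting algebraic identity together with the small base cases. The only cosmetic difference is that the paper inducts on $a$ rather than on $a+b+c$, which changes nothing of substance.
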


  \section{Preliminaries}
  
  The \textit{dual graph} of a region $R$ is the graph comprising one vertex for each unit triangle in $R$.  Two vertices share an edge in the dual graph if and only if their corresponding unit triangles are edge-adjacent.  For regions on the triangular lattice, we've seen that each unit triangle is either pointing upwards or downwards - in particular, there are two types of unit triangles.  When creating the dual graph of $R$, we can recover this information by coloring the vertices corresponding to upward-pointing unit triangles one color, and those corresponding to downward-pointing unit triangles another.  The resulting graph is now bipartite, and lozenge tilings of a region $R$ are clearly in one-to-one correspondence with perfect matchings of the bipartite dual graph.  If a region has weighted lozenges, these correspond to weighted edges in the dual graph, and the matching generating functions of the region and graph still coincide.
  
 Translating our regions to their dual graphs allows us to make use of the graphical condensation method of Kuo, which provides an effective way to count perfect matchings (or matching generating functions) of bipartite graphs.  There are several versions; the one we will use is stated below.
  
  \begin{theorem}[Kuo condensation]\label{kuograph}
 Let $G=(V_1,V_2,E)$ be a plane bipartite graph with $|V_1|=|V_2|+1,$ and suppose that vertices $t,u,v,\textrm{and }w$ appear cyclically on a face of $G$.  If $t,u,v\in V_1$ and $w\in V_2$, then 
 \begin{multline*}M(G-u)M(G-\{t,v,w\})=\\ M(G-t)M(G-\{u,v,w\})+M(G-v)M(G-\{t,u,w\}).\end{multline*}
 \end{theorem}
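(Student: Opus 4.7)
The plan is to prove the identity by a weight-preserving bijection on \emph{pairs} of perfect matchings, which is the classical superposition/path-swapping argument used in Kuo's original paper. I would start by interpreting each product $M(G-X)M(G-Y)$ appearing in the identity as a generating function over pairs $(M_1,M_2)$, where $M_1$ is a perfect matching of $G-X$ and $M_2$ is a perfect matching of $G-Y$, with weight $w(M_1)w(M_2)$. Throughout, I would work with the superposition multigraph $M_1\cup M_2$ on the full vertex set of $G$.

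The first key step is a degree count. In the superposition for the left-hand side, every vertex other than $\{t,u,v,w\}$ is covered by exactly one $M_1$-edge and one $M_2$-edge, so has degree $2$; the vertex $u$ has only an $M_2$-edge and each of $t,v,w$ has only an $M_1$-edge, so they all have degree $1$. Hence $M_1\cup M_2$ decomposes into cycles together with exactly two simple paths whose four endpoints are $\{t,u,v,w\}$. The same analysis applies to the superpositions arising on the right-hand side, with the roles of the ``$M_1$-only'' and ``$M_2$-only'' endpoints permuted.

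The heart of the argument is classifying which of the three possible pairings of $\{t,u,v,w\}$ into two paths can actually occur, using two independent constraints. The \textbf{bipartite constraint}: in the superposition, path edges alternate between $M_1$ and $M_2$ and, since $G$ is bipartite, between $V_1$ and $V_2$. A path whose two endpoints are both ``$M_1$-only'' (or both ``$M_2$-only'') must have odd length, hence endpoints in different color classes; a path with one endpoint of each type has even length and endpoints in the same color class. Applying this with $t,u,v\in V_1$ and $w\in V_2$ rules out certain pairings on each side of the identity. The \textbf{planarity constraint}: since $t,u,v,w$ lie in cyclic order on a single face of the plane graph $G$, the two disjoint paths of $M_1\cup M_2$ cannot cross, so the ``diagonal'' pairing $\{t,v\}\cup\{u,w\}$ is impossible. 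Combining both constraints, I would check that the pairs counted by $M(G-u)M(G-\{t,v,w\})$ split into exactly two classes, according to whether the path pairing is $\{t,u\}\cup\{v,w\}$ or $\{u,v\}\cup\{t,w\}$; that $M(G-t)M(G-\{u,v,w\})$ forces the unique pairing $\{t,u\}\cup\{v,w\}$; and that $M(G-v)M(G-\{t,u,w\})$ forces $\{u,v\}\cup\{t,w\}$.

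Finally, I would define the bijection by path-swapping: given $(M_1,M_2)$ on the left with path pairing $\{t,u\}\cup\{v,w\}$, exchange the $M_1$- and $M_2$-edges along the $t$-$u$ path to produce a pair counted by $M(G-t)M(G-\{u,v,w\})$; analogously, pairing $\{u,v\}\cup\{t,w\}$ is sent via the swap along the $u$-$v$ path to a pair counted by $M(G-v)M(G-\{t,u,w\})$. Each swap is an involution on the multiset of edges, so it preserves the product of edge weights, and the two target classes are exactly the ones surviving the constraints on the right. The main obstacle is the bookkeeping in the constraint analysis: one must verify carefully, for each of the three products, which of the three possible endpoint pairings survive both the color-parity check and the non-crossing check, and confirm that the surviving pairings match up exactly so that the path-swap bijection is well-defined and covers both right-hand terms without overlap.
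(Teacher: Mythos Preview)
Your argument is correct and is precisely the superposition/path-swapping proof from Kuo's original paper. Note, however, that the present paper does not give its own proof of this statement: \Cref{kuograph} is quoted as a preliminary tool and attributed to Kuo~\cite{kuo}, so there is no in-paper proof to compare against. Your write-up would serve as a faithful reconstruction of the cited proof.
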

  
 In \cite{ciuculai14}, Ciucu and Lai give conditions under which the matching generating function of a bipartite graph is the product of the matching generating function of two induced subgraphs.  We will need this result for some special cases in the proofs of \Cref{mainthm,weighted}.
  
    \begin{lemma}[Graph Splitting Lemma]\label{graphsplit}
Let $G=(V_1,V_2,E)$ be a bipartite graph.  Assume $H$ is an induced subgraph of $G$ that satisfies the following condition:
\begin{enumerate}[(i)]
\item (Separating condition) There are no edges of $G$ connecting a vertex in $V(H)\cap V_1$ and a vertex in $V(G-H)$.
\item (Balancing condition) $|V(H)\cap V_1| = |V(H)\cap V_2|$.
\end{enumerate}
Then $$M(G)=M(H)M(G-H).$$
\end{lemma}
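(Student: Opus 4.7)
The plan is to establish a bijection between perfect matchings of $G$ and pairs consisting of a perfect matching of $H$ together with a perfect matching of $G-H$, from which the product formula follows by the multiplicativity of weights over edges.

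First I would observe that the separating condition, combined with bipartiteness, forces every vertex $v \in V(H) \cap V_1$ to have all of its neighbors in $V(H) \cap V_2$: the neighbors of $v$ lie in $V_2$ by bipartiteness, and condition (i) rules out those lying in $V(G-H) \cap V_2$. Consequently, in any perfect matching $\mu$ of $G$, each of the $|V(H) \cap V_1|$ vertices on the $V_1$-side of $H$ is matched to a distinct vertex of $V(H) \cap V_2$ via an edge of $H$.

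Second, I would invoke the balancing condition: since $|V(H) \cap V_1| = |V(H) \cap V_2|$, the previous step actually saturates every vertex of $V(H) \cap V_2$ via an edge of $H$. Hence $\mu$ restricts to a perfect matching $\mu_H$ of $H$, and the remaining edges of $\mu$ saturate exactly $V(G) \setminus V(H)$ using edges with both endpoints in $V(G-H)$, yielding a perfect matching $\mu_{G-H}$ of $G-H$. Conversely, given any perfect matching of $H$ and any perfect matching of $G-H$, their union is a perfect matching of $G$, so the map $\mu \mapsto (\mu_H, \mu_{G-H})$ is a bijection with inverse given by union.

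Finally, since the weight of a matching is the product of its edge weights and the edges of $\mu$ partition into the edges of $\mu_H$ and the edges of $\mu_{G-H}$, we have $\mathrm{wt}(\mu) = \mathrm{wt}(\mu_H)\, \mathrm{wt}(\mu_{G-H})$; summing over all perfect matchings gives $M(G) = M(H)\, M(G-H)$. There is no substantive obstacle in the proof. The only mildly subtle point worth flagging is that condition (i) is stated asymmetrically (it mentions $V(H) \cap V_1$ only), so one must rely on the balancing condition to rule out the possibility that some vertex of $V(H) \cap V_2$ gets matched across the boundary to a vertex of $V(G-H) \cap V_1$ -- exactly what the counting argument above accomplishes.
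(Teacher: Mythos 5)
Your proof is correct and complete: the paper itself does not prove this lemma (it is quoted from Ciucu and Lai \cite{ciuculai14}), and your bijection argument --- forcing every vertex of $V(H)\cap V_1$ to be matched inside $H$ via the separating condition, then saturating $V(H)\cap V_2$ via the balancing condition --- is essentially the standard proof given in that reference. Your closing remark correctly identifies the one subtle point, namely that the asymmetry of condition (i) is compensated by the counting forced by condition (ii).
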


  \section{Proofs of Theorems 2.1 and 2.2}
  
 We will prove \Cref{mainthm} via induction on $a$ using \Cref{kuograph}.  We will apply \Cref{kuograph} to the dual graph of the region $S_{a,b,c}$ without the unit triangle removed from the northeast side, as in Figure~\ref{kuoapplied}.  This region is unbalanced, as required by the theorem, and the locations of the vertices $t,u,v,$ and $w$ are given.  
 
  \begin{figure}[ht]
 \centering
 \includegraphics[height=2in]{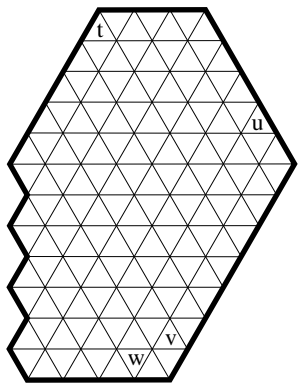}
 \caption{The region to which we apply Kuo condensation.}
 \label{kuoapplied}
 \end{figure}  

 Applying Kuo condensation to such a region gives us a recurrence involving six new regions.  They are shown in Figure~\ref{recurrencepic}.  In each subfigure, the triangles corresponding to removed vertices are labelled and any subsequently forced lozenges are shown.

 \begin{figure}
 \centering
 \includegraphics[height=7in]{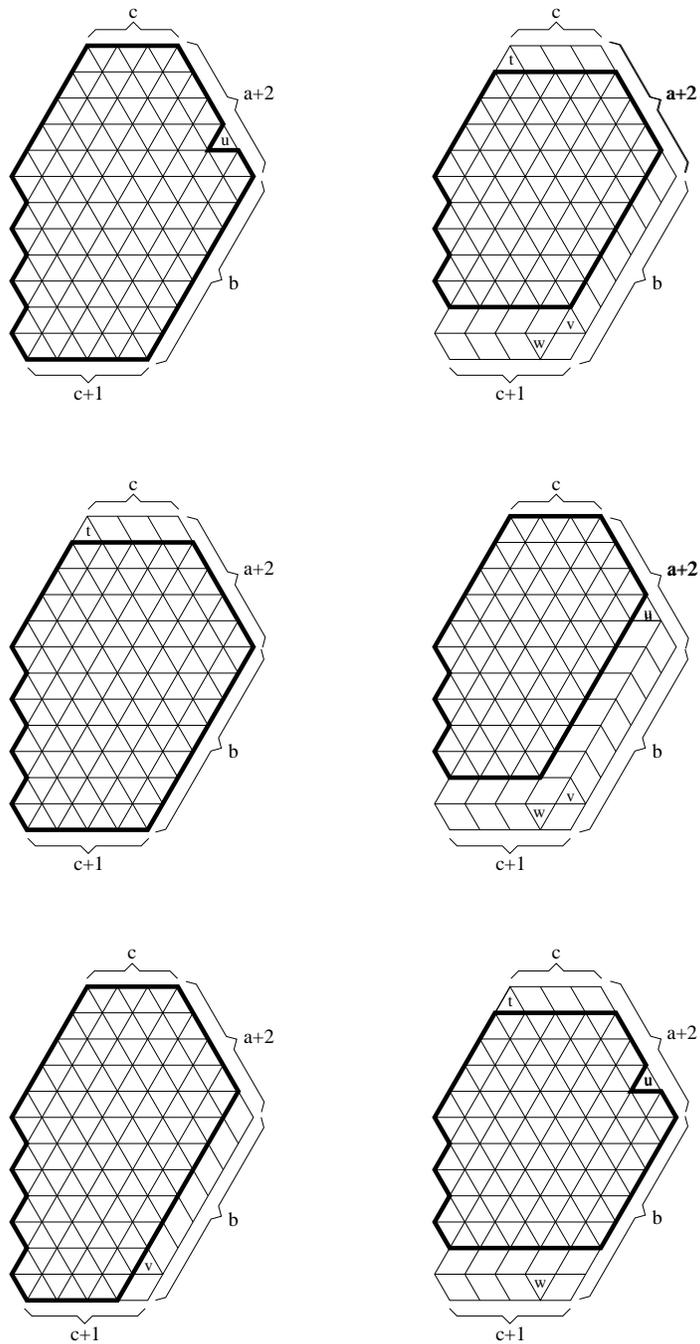}
 \caption{The six regions obtained after applying Kuo condensation.}
 \label{recurrencepic}
 \end{figure}

   Based on forcing, we see that 
 \begin{align*}
 M(G-u) &=  M(S_{a,b,c}),\\
 M(G-\{t,v,w\}) &=M(P_{a,b-1,c+1}),\\
 M(G-t) &=M(P_{a+1,b,c+1}),\\
 M(G-\{u,v,w\}) &=M(P_{a,b,c}),\\
 M(G-v) &=M(P_{a+1,b+1,c}),\textrm{ and }\\
 M(G-\{t,u,w\}) &=M(S_{a-1,b-2,c+1}).
 \end{align*}
  
  Therefore, we know 
  
  \begin{multline}\label{rec}
  M(S_{a,b,c})M(P_{a,b-1,c+1})=\\
   M(P_{a+1,b,c+1})M(P_{a,b,c}) + M(P_{a+1,b+1,c})M(S_{a-1,b-2,c+1}),
  \end{multline}
  
 as long as $a\geq 1$ and $b\geq 2$.  \Cref{proctorthm} explicitly gives the matching generating functions for the $P$-type regions, so \Cref{rec} is merely a recurrence between the matching generating functions of $S$-type regions.  We will prove \Cref{mainthm} by induction on $a$.  Therefore, it suffices to show that the claimed formula in \Cref{mainthm} satisfies \Cref{rec} and holds for $a=0$.  For completeness, we will also show that this formula holds when $b=0$ or $b=1$.
 
 If $a=0$, the formula from \Cref{mainthm} implies
 \begin{multline*}
 M(S_{0,b,c})=\\ \frac{(c+2)_b(2c+b+3)_{-1}[(2b+2)c+(b+1)(b+2)]}{2_{b+1}(b+3)_{-1}}={b+c+1 \choose b}.
 \end{multline*}
 
 On the other hand, if $a=0$, then the north-most row is forced and the resulting region is a hexagon with side-lengths $c+1,1,b,c+1,1,b$ as in Figure~\ref{a0}.  MacMahon's formula (\ref{macmahonthm}) verifies that the number of tilings of this hexagon is indeed ${b+c+1\choose b}$.
 
   \begin{figure}
 \centering
 \includegraphics[height=1.5in]{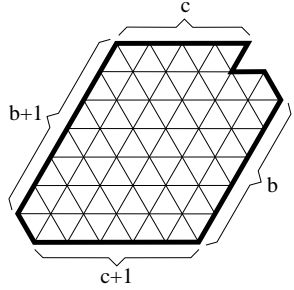}
 \caption{When $a=0$, it is clear that the north-most row is forced.}
 \label{a0}
 \end{figure}  
 
 We now need to check that \Cref{rec} holds.  First, we rewrite \Cref{proctorthm} as \[M(P_{a,b,c})=\prod_{i=1}^a\frac{(c+i)_{b-a+1}(2c+b-a+1+i)_{i-1}}{i_{b-a+i}}.\]
Using this formula for the $P$-type regions and the formula from \Cref{mainthm} for the $S$-type regions, \Cref{rec} becomes
 
 \begin{multline}\label{bigrec}
 \frac{(c+a+2)_{b-a}(2c+b+3)_{a-1}[(2b-a+2)c+(b+1)(b+2)]}{(a+2)_{b-a+1}(b+3)_{a-1}}\\
 \times\prod_{i=1}^a\frac{(c+i)_{b-a+i}(2c+b-a+1+i)_{i-1}}{i_{b-a+1}(b-a+1+i)_{i-1}}\\ \times\prod_{i=1}^a\frac{(c+1+i)_{b-a}(2c+b-a+2+i)_{i-1}}{i_{b-a+i+1}}\\
 =\prod_{i=1}^{a+1}\frac{(c+1+i)_{b-a}(2c+b-a+2+i)_{i-1}}{i_{b-a-1+i}}\\ \times\prod_{i=1}^a\frac{(c+i)_{b-a+1}(2c+b-a+1+i)_{i-1}}{i_{b-a+i}}\\
 +\frac{(c+a+2)_{b-a-1}(2c+b+3)_{a-2}[(2b-a-1)(c+1)+b(b-1)]}{(a+1)_{b-a}(b+1)_{a-2}}\\
 \times\prod_{i=1}^{a-1}\frac{(c+1+i)_{b-a}(2c+b-a+2+i)_{i-1}}{i_{b-a}(b-a+i)_{i-1}}\\ \times\prod_{i=1}^{a+1}\frac{(c+i)_{b-a+1}(2c+b-a+1+i)_{i-1}}{i_{b-a+i}}.
 \end{multline}

 Through straightforward algebraic manipulation, one can verify that \Cref{bigrec} is true.

 To complete the proof of \Cref{mainthm}, we need to show the result for $b=0,1$.  As $0\leq a\leq b$, the only remaining case is when $a=b=1$.  In this case, \Cref{mainthm} says that $M(S_{1,1,c})=(c+1)(c+2).$  
 
    \begin{figure}
 \centering
 \includegraphics[height=1.2in]{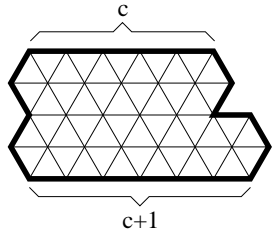}
 \caption{The $a=b=1$ case.}
 \label{ab1}
 \end{figure} 
 
 We get a region as in Figure~\ref{ab1} when $a=b=1.$  At this point we apply \Cref{graphsplit}, and we take $H$ to be the dual subgraph to the top two rows of $S_{1,1,c}$ (which makes $G-H$ the dual graph to the bottom two rows).  It is clear that $M(S_{1,1,c})$ is the product of the matching generating functions of two hexagons - one with side-lengths $c,1,1,c,1,1$ and the other $c+1,1,1,c+1,1,1.$  Using (\ref{macmahonthm}), we see that the matching generating functions are $c+1$ and $c+2$ respectively. \qed
 
 The proof of \Cref{weighted} is similar to that of \Cref{mainthm}.  \Cref{kuograph} is applied in exactly the same way, yielding a recurrence identical to that of \Cref{rec} but with $S$ and $P$ replaced by $S'$ and $P'$, respectively.  Verifying that the formula in \Cref{weighted} satisfies the new recurrence is done similarly, as are the few special cases.
  
\section{Symmetric triply-dented hexagons}
  By symmetrizing our region along the ``maximal staircase" we obtain a symmetric triply-dented hexagon, $STDH_{a,b,c}$ as in Figure~\ref{stdh} below.  Notice that the removed triangle on the north edge is centrally located and that the removed unit triangles on the northwest and northeast edges are at distance one away from the west and east corners of the region, respectively.
  
  \begin{figure}
  \centering
  \includegraphics[height=2in]{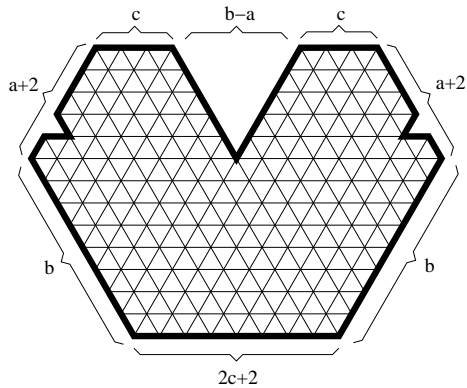}
 \caption{$STDH_{3,8,3}$.}
 \label{stdh}
 \end{figure}

  Using Ciucu's factorization theorem from \cite{ciucu97}, we get the following result in the vein of Lai \cite{lai} and Eisenk{\"o}lbl \cite{eisen}.
  
  \begin{cor}
  For non-negative integers $a,b,$ and $c$ with $a\leq b$ we have 
  \[M(STDH_{a,b,c})=2^{a+1} M(S'_{a,b,c})M(S_{a,b-1,c}).\]
  \end{cor}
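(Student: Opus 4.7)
The plan is to apply Ciucu's factorization theorem from \cite{ciucu97} to $STDH_{a,b,c}$. By construction, the region is invariant under reflection across the vertical line $\ell$ that bisects the north and south edges of the ambient hexagon and passes through the centrally located north dent. Ciucu's theorem then yields a factorization
\[
M(STDH_{a,b,c}) = 2^{k}\, M(R^{+})\, M(R^{-}),
\]
where $R^{+}$ and $R^{-}$ are the two halves produced by cutting along $\ell$, one of them carrying weight $\tfrac{1}{2}$ on the vertical lozenges cut by $\ell$, and $k$ is determined by how $\ell$ meets the lattice.

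Next I would identify the two halves. On the weighted side, after the cut the region has the shape of $S_{a,b,c}$: the ``staircase'' side of $S_{a,b,c}$ is exactly where $\ell$ cuts, and the northeast unit-triangle dent of $S_{a,b,c}$ is inherited from one of the NW/NE dents of $STDH_{a,b,c}$. The inherited $\tfrac{1}{2}$-weights along this new west side turn this half into precisely $S'_{a,b,c}$. The unweighted half has the same overall shape but with one crucial modification: because the centrally located north dent of $STDH_{a,b,c}$ sits on $\ell$, the cut assigns one half-triangle of that dent to this side, effectively shortening the corresponding hexagon side by one unit and producing $S_{a,b-1,c}$.

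Finally, I would verify that the exponent equals $a+1$ by counting the unit triangles whose interiors $\ell$ passes through (equivalently, the vertices of the dual graph lying on $\ell$). The main obstacle is making the two geometric identifications fully rigorous --- especially pinning down why the unweighted half carries parameter $b-1$ rather than $b$, and justifying the precise count of axis-crossings that gives $2^{a+1}$. Both points reduce to a careful examination of the region near $\ell$, particularly around the central north dent, where the dimension shift originates.
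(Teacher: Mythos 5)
Your proposal follows essentially the same route as the paper: apply Ciucu's factorization theorem across the vertical symmetry axis, identify the two resulting halves as $S'_{a,b,c}$ (the half inheriting the $\tfrac12$-weights) and $S_{a,b-1,c}$, and account for the factor $2^{a+1}$ from the structure of the dual graph along the axis. The paper's own proof is equally terse (it cuts along the axis as in its Figure~\ref{stdhfac} and notes that ``after forcing'' the two pieces are of type $S'$ and $S$), so the only point worth tightening in your write-up is that the parameter shift to $b-1$ comes from lozenges forced by the cutting procedure near the axis rather than from the ``half-triangle'' heuristic you describe.
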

  
  \begin{proof}
  To apply the factorization theorem we must cut the region into the two subregions shown in Figure~\ref{stdhfac}.

    \begin{figure}
 \centering
 \includegraphics[height=2in]{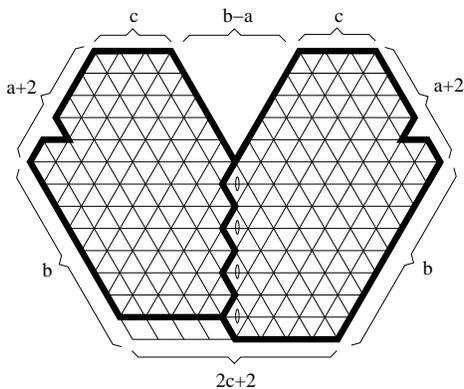}
 \caption{Applying the factorization theorem to $STDH_{3,8,3}$.}
 \label{stdhfac}
 \end{figure}
 After forcing, we have one region of type $S$ and another of type $S'$.  The result follows immediately.
  \end{proof}

  \newpage

 \end{document}